\documentclass[12pt,reqno]{amsart}

\usepackage{color}
\usepackage{hyperref}
\usepackage{url}
\usepackage{amsmath}
\usepackage{amssymb}
\usepackage{amstext}
\usepackage{comment}
\usepackage{stmaryrd}
\usepackage{latexsym}
\usepackage{enumitem}
\usepackage[left=3.5cm, right=3.5cm, headheight=14.0pt]{geometry}

\hypersetup{
    colorlinks=true,
    pdfborder={0 0 0},
}

\usepackage{fancyhdr}
\pagestyle{fancy}
\fancyhf{}
\fancyhead[CO]{Cauchy-Davenport type theorems for semigroups}
\fancyhead[CE]{Salvatore \textsc{Tringali}}
\fancyhead[RO,LE]{\thepage}

\newtheorem{theorem}{Theorem}
\newtheorem{lemma}[theorem]{Lemma}
\newtheorem{proposition}[theorem]{Proposition}
\newtheorem{corollary}[theorem]{Corollary}
\newtheorem{conjecture}[theorem]{Conjecture}

\theoremstyle{definition}
\newtheorem{definition}[theorem]{Definition}
\newtheorem{example}[theorem]{Example}
\newtheorem{remark}[theorem]{Remark}
\newtheorem*{claim*}{Claim}

\renewcommand{\AA}{\mathbb A}

\providecommand{\ord}{\mathop{\rm ord}\nolimits}

\providecommand{\GGb}{\mathbb{G}}
\providecommand{\NNb}{\mathbf{N}}

\renewcommand{\emptyset}{\varnothing}

\newcommand{\fixed}[2][1]{%
  \begingroup
  \spaceskip=#1\fontdimen2\font minus \fontdimen4\font
  \xspaceskip=0pt\relax
  #2%
  \endgroup
}

\hyphenation{
  e-ven-tu-al-ly in-e-qual-i-ty ab-so-lute-ly ex-ten-sion lin-e-ar
  con-cen-trat-ed in-jec-ti-ve e-quiv-a-lence ex-trac-tion
  ul-ti-mate-ly func-tions e-quiv-a-lent ho-mo-mor-phism
  sys-tem-at-i-cal-ly non-emp-ty mon-o-mor-phism lit-er-a-ture
  par-en-the-siza-tion En-sei-gne-ment can-cel-la-ble
}

\begin{document}
\title{Cauchy-Davenport type theorems for semigroups}
\author{Salvatore Tringali}
\address{Texas A\&M University at Qatar, Education City -- PO Box 23874 Doha, Qatar}
\email{salvatore.tringali@qatar.tamu.edu }
%\urladdr{http://www.math.polytechnique.fr/~tringali/}
% alternative email: salvo.tringali@gmail.com
%
\thanks{This research was partially supported by the French ANR Project No. ANR-12-BS01-0011 and mostly developed while the author was funded from the European Community's 7th Framework Programme (FP7/2007-2013) under Grant Agreement No. 276487 (project ApProCEM)}
\subjclass[2010]{Primary 05E15, 11B13, 20D60; Secondary 20E99}
%
% 05E15: Combinatorial aspects of groups and algebras
% 11B13: Additive bases, including sumsets
% 20D60: Arithmetic and combinatorial problems
% 20E99: None of the above, but in this section (Group theory and generalizations)
%
\keywords{Additive theory, Cauchy-Davenport type theorems, Hamidoune-K\'arolyi theorem, Kemperman's theorem, semigroups, sumsets, transformation proofs.}
\begin{abstract}
Let $\mathbb A = (A, +)$ be a (possibly non-commutative) semigroup. For $Z \subseteq A$ we define $Z^\times := Z \cap \mathbb A^\times$, where $\mathbb A^\times$ is the set of the units of $\mathbb A$, and $$\gamma(Z) := \sup_{z_0 \in Z^\times} \inf_{z_0 \ne z \in Z} {\rm ord}(z - z_0).$$ The paper investigates some properties of $\gamma(\cdot)$ and shows the following extension of the Cauchy-Davenport theorem: If $\mathbb A$ is cancellative and $X, Y \subseteq A$, then
$$|X+Y| \ge \min(\gamma(X+Y),|X| + |Y| - 1).$$
This implies a generalization of Kemperman's inequality for torsion-free groups and
strengthens another extension of the Cauchy-Davenport theorem, where $\mathbb{A}$ is a group and $\gamma(X+Y)$ in the above is replaced by the infimum of $|S|$ as $S$ ranges over the non-trivial subgroups of $\mathbb{A}$ (Hamidoune-K\'arolyi theorem).
\end{abstract}

\maketitle

\section{Introduction}
\label{sec:intro}

The present article fits in a broader program of study initiated in \cite{Tring12, Tring11} and aimed to extend some pieces of the theory on Cauchy-Davenport type theorems to the setting of possibly non-commutative or non-cancellative semigroups.

A motivation for this comes from considering that the non-zero elements of a non-trivial unital ring, either commutative or not, are not cancellative, and hence not even closed, under multiplication (unless the ring is a domain).

Another motivation lies in the fact that, even when $\GGb = (G, +)$ is a commutative group, the non-empty subsets of $G$, endowed with the binary operation taking a pair $(X,Y)$ to the sumset $\{x+y: x \in X, y \in Y\}$, are not a cancellative monoid (unless $\GGb$ is trivial): When $\mathbb G$ is, up to an isomorphism, the additive group of the integers, the corresponding structure on the powerset of $G$ has been studied by J.~Cilleruello, Y.~O.~Hamidoune and O.~Serra \cite{Cill10} in the context of acyclic monoids, and more recently by A.~Geroldinger and W.~Schmid \cite{GerSch} in relation to the factorization theory of (commutative and cancellative) unital semigroups.

Here, more specifically, we shall prove an extension (Theorem \ref{th:new_strong_main}) of the (classical) Cauchy-Davenport theorem (Theorem \ref{th:basic_CD}) to the setting of cancellative, but not necessarily commutative, semigroups. From this we derive as an almost immediate consequence a strengthening (Corollary \ref{cor:karolyi_generalized}) of an addition theorem for groups due to H.~O.~Hamidoune and G.~K\'arolyi (Theorem \ref{th:karolyi_finite}).

In addition, we present and discuss some aspects of a conjecture (Conjecture \ref{conj:cauchy_davenport}) which, if true, would further improve most of the results in the paper.

In all of this, a key role is played by certain properties of what we call the Cauchy-Davenport constant of a tuple of sets (Definition \ref{def:davenport_constant}), which are also investigated to some extent in this work.
\section{Preliminaries and general notation}
\label{sec:preliminaries}
We refer to \cite{Ruzsa09} for standard notation and terminology from additive [semi]group theory (note that, in this paper, ``additive'' does not imply ``commutative'').

We denote by $\NNb$ the set of non-negative integers, endowed with its usual operations and order, which we extend as customary to $\NNb \cup \{\infty\}$ by adjoining an element $\infty \notin \NNb$ and taking, in particular, $0 \cdot \infty :=\infty \cdot 0 := 0$.

At several points, we will use without explicit mention that if $A \subseteq B \subseteq \NNb \cup \{\infty\}$ then $\inf(B) \le \inf(A)$ and $\sup(A) \le \sup(B)$, with the convention that the supremum of $\emptyset$ is $0$ and the infimum of $\emptyset$ is $\infty$.

Unless differently stated, $i$ and $n$ shall denote positive integers. Given a set $X$, we write $|X|$ for the size of $X$ if $X$ is finite, and let $|X| := \infty$ otherwise.

Here, a semigroup is a pair $\mathbb A = (A, +)$ consisting of a set $A$ and an associative binary operation $+$ on $A$ (we refer to \cite{Howie96} for basic aspects of semigroup theory).

We let $\mathbb A^\times$ be the set of units of $\mathbb A$, so $\mathbb A^\times = \emptyset$ if $\mathbb A$ is not a \textit{monoid} (or unital semigroup).
In this respect, we recall that, if $\mathbb A$ is unital with identity $0$, a \textit{unit} of $\mathbb A$
is an element $z \in A$ for which there exists a (provably unique) $\tilde z \in A$, called
the inverse of $z$ (in $\mathbb A$), such that $z + \tilde z =
\tilde z + z = 0$ (we usually denote $\tilde{z}$ by $-z$).

For $Z \subseteq A$ we write $Z^\times$ instead of $Z \cap \mathbb A^\times$ (if there is no likelihood of confusion) and $\langle Z \rangle_\mathbb{A}$ for the smallest sub\textit{semigroup} of $\mathbb A$
containing $Z$.
Then given $z \in A$, we use
$\ord_\mathbb{A}(z)$ for the \textit{order} of $z$ (in $\mathbb A$), i.e. we let $\ord_\mathbb{A}(z)
:= |\langle \{z\} \rangle_\mathbb{A}|$, so generalizing the common notion of order for the elements of a group.

An element $z \in A$ is said to be cancellable (in $\mathbb A$) if $x+z = y+z$ or $z + x = z + y$ for $x,y \in A$ imply $x = y$, and $\mathbb A$ is cancellative if each $z \in A$ is cancellable.
%(see \cite[Section I.2.2]{BourAlgI})

Building on these premises, we assume for the remainder that $\mathbb A = (A, +)$ is a fixed, arbitrary semigroup (unless differently noted), and let $0$ be the identity of a \textit{unitization}, $\mathbb A^{(0)}$, of $\mathbb A$:
%(cf. \cite[p. 2]{Howie96}):
If $\mathbb A$ is unital, then $\mathbb A^{(0)} := \mathbb A$; otherwise, $\mathbb A^{(0)} := (A \cup \{0\}, +)$, where $0$ is some element $\notin A$ and, by an abuse of notation, $+$ is the unique extension of the operation of $\mathbb A$ to a binary operation on $A \cup \{0\}$ for which $0$ serves as an identity.
Then, for $S \subseteq A$ we define $\mathfrak{p}_\mathbb{A}(S) := \inf_{z \in S \setminus \{0\}} \ord_{\mathbb A^{(0)}}(z)$.

If no confusion can arise, we omit the subscript `$\mathbb A$' from this notation, and for $X \subseteq A$ and $z \in A$ we write $X+z$ in place of $X+\{z\}$, and use $X-{z}$ for $X+\tilde{z}$ if $z \in A^\times$ and $\tilde{z}$ is the inverse of $z$ (similarly with $z+X$ and ${-z}+X$).

\section{Cauchy-Davenport type theorems}
\label{sec:the_CD_theorem}

The Cauchy-Davenport theorem, see
\cite{Cauchy1813} and
\cite{Daven47}, is one of the first significant achievements in the field of additive theory:
\begin{theorem}[Cauchy-Davenport theorem]
\label{th:basic_CD}
Let $\mathbb A$ be a group of prime order $p$
and $X,Y$ non-empty subsets of $A$. Then $|X+Y| \ge \min(p,
|X|+|Y|-1)$.
\end{theorem}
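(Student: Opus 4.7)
The plan is a transformation proof via the Dyson $e$-transform, inducting on $|Y|$. Since $\mathbb{A}$ has prime order it is cyclic and hence commutative, so I may work with the usual abelian-group structure (in particular, every non-zero element generates $A$). The base case $|Y| = 1$ is immediate: $|X+Y| = |X| = |X|+|Y|-1$.

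For the inductive step, observe first that if $X+Y = A$ then $|X+Y| = p$ and the bound holds trivially, so I may assume $X+Y \subsetneq A$ and $|Y| \ge 2$. For each $e \in A$, define the Dyson transforms
\[ X_e := X \cup (e+Y), \qquad Y_e := Y \cap (X - e). \]
Two routine verifications are needed: that $X_e + Y_e \subseteq X + Y$ (split on whether an element of $X_e$ lies in $X$ or in $e+Y$, then regroup the sum using commutativity), and that $|X_e| + |Y_e| = |X| + |Y|$ (the translation $y \mapsto e+y$ identifies $Y_e$ with $(e+Y) \cap X$, so inclusion-exclusion for $X_e$ cancels $|Y_e|$ exactly). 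Given any $e$ with $0 < |Y_e| < |Y|$, the pair $(X_e, Y_e)$ strictly decreases $|Y|$ while preserving $|X|+|Y|$; the inductive hypothesis applied to $(X_e, Y_e)$, together with the containment $X_e + Y_e \subseteq X + Y$, then yields $|X+Y| \ge \min(p, |X|+|Y|-1)$.

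The main obstacle, and the single point where primality of $p$ is used, is the existence of such an $e$. Suppose for contradiction that every translate $e+Y$ is either disjoint from $X$ or entirely contained in $X$. For any $x_0 \in X$ and $y_0 \in Y$, the choice $e := x_0 - y_0$ puts $x_0 \in (e+Y) \cap X$, so the dichotomy forces $x_0 + (Y - y_0) \subseteq X$; letting $x_0$ and $y_0$ vary gives $X + (Y - Y) \subseteq X$. Picking any $d = y_1 - y_2$ with distinct $y_1, y_2 \in Y$ (possible since $|Y| \ge 2$) yields $X + d \subseteq X$, hence $X + d = X$ by finiteness of $X$, so $X$ is a union of cosets of $\langle d \rangle$. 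Since $p$ is prime and $d \ne 0$, $\langle d \rangle = A$, forcing $X = A$ and consequently $X + Y = A$, contrary to our reduction. This completes the plan.
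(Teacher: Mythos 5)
Your argument is correct: it is the classical Dyson $e$-transform proof, and every step checks out (the two verifications for $X_e+Y_e\subseteq X+Y$ and $|X_e|+|Y_e|=|X|+|Y|$ are right, and the contradiction argument for the existence of a useful $e$ correctly isolates where primality enters, via $\langle d\rangle=A$ for $d\ne 0$). The paper, however, never proves Theorem \ref{th:basic_CD} directly in this way: it treats the statement as the classical special case of its much more general Theorem \ref{th:new_strong_main} (equivalently, of Corollary \ref{cor:karolyi_generalized}), whose proof is a different kind of transformation argument in the spirit of Kemperman. There one normalizes by units so that $0\in X\cap Y$, takes a minimax counterexample, finds a smallest $n$ with $X+nZ+Y\not\subseteq X+Y$ for $Z:=X\cap Y$, and then enlarges one of the sets by a translate $X_0+\bar z$ or $\bar z+Y_0$ while shrinking the other, contradicting minimaximality. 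The essential difference is scope: your $e$-transform relies on commutativity (you use it explicitly to regroup $e+y+x-e$) and on every nonzero element generating $A$, so it is tailored to $\mathbb Z/p\mathbb Z$; the paper's transform is designed to survive in arbitrary cancellative, possibly non-commutative semigroups, at the price of a more elaborate bookkeeping with the constant $\gamma(X+Y)$. For the prime-order statement itself your route is shorter and entirely adequate; it just does not extend to the generality the paper is after.
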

The theorem applies especially to the
additive group of the integers modulo a prime.
Many extensions to composite moduli are known, and a couple of them, due to I. Chowla and S.~S. Pillai, have been recently sharpened in \cite{Tring12}
as a byproduct of Theorem \ref{th:old_strong_main} below.
In the same paper, there also appears the following definition (though in a different notation), which lies at the heart of the present work:
\begin{definition}\label{def:davenport_constant}
For a set $X \subseteq A$, we let
$$\gamma_\mathbb{A}(X) := \sup_{x_0 \in X^\times}
\inf_{x_0 \ne x \in X} \ord(x - x_0)
$$
Then, for $X_1,\ldots, X_n \subseteq A$ we define
$$
\gamma_\mathbb{A}(X_1,\ldots, X_n) := \max(\gamma_\mathbb{A}(X_1), \ldots, \gamma_\mathbb{A}(X_n)),,
$$
and refer to $\gamma_\mathbb{A}(X_1,\ldots, X_n)$ as the Cauchy-Davenport constant of the $n$-tuple $(X_1, \ldots, X_n)$ relative to $\mathbb A$ (we omit the subscript `$\mathbb A$' if clear from the context).
\end{definition}
Any tuple of subsets of $A$ has a well-defined
Cauchy-Davenport constant (relative to $\mathbb A$), and it is interesting to compare it with other ``structural parameters'':
\begin{lemma}\label{lem:simple_lemma}
Let $X,Y$ be subsets of $A$ and assume that $\mathbb A$ is cancellative and $X^\times + Y^\times$ is non-empty. Then $
\gamma(X,Y) \ge \min(\gamma(X), \gamma(Y)) \ge \gamma(X+Y) \ge \mathfrak p(A)$.
\end{lemma}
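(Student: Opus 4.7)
The plan is to handle the three inequalities in the lemma one by one. The leftmost $\gamma(X,Y) \ge \min(\gamma(X), \gamma(Y))$ is immediate from Definition \ref{def:davenport_constant}, which sets $\gamma(X,Y) = \max(\gamma(X), \gamma(Y))$. Before turning to the others, I would record the preliminary observation that the hypothesis $X^\times + Y^\times \ne \emptyset$ forces $\mathbb A^\times$ to be non-empty, hence $\mathbb A$ to be a monoid (otherwise $\mathbb A^\times = \emptyset$ by convention), so that $\mathbb A^{(0)} = \mathbb A$ and $\ord_{\mathbb A^{(0)}} = \ord_\mathbb{A}$ throughout.

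The main technical ingredient, and the one place where cancellativity is really used, is the auxiliary claim that \emph{in a cancellative monoid, whenever $s_0 = x_0 + y_0$ is a unit, the summands $x_0$ and $y_0$ are both units.} To prove this I would let $\tilde s_0$ denote the inverse of $s_0$ and set $r := y_0 + \tilde s_0$, so that $x_0 + r = 0$; then $x_0 + (r + x_0) = (x_0 + r) + x_0 = x_0 + 0$ together with left-cancellativity of $x_0$ gives $r + x_0 = 0$, so $r$ is a two-sided inverse of $x_0$, and a symmetric argument yields $y_0 \in \mathbb A^\times$. Two consequences will be used below: $(X+Y)^\times$ is non-empty (it contains $X^\times + Y^\times$), and every $s_0 \in (X+Y)^\times$ admits a decomposition $s_0 = x_0 + y_0$ with $x_0 \in X^\times$ and $y_0 \in Y^\times$.

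With this in hand, I would prove the middle inequality by fixing an arbitrary $s_0 \in (X+Y)^\times$ together with such a decomposition $s_0 = x_0 + y_0$. For every $x \in X \setminus \{x_0\}$, the element $s := x + y_0$ lies in $(X+Y) \setminus \{s_0\}$ by cancellativity, and a further application of cancellativity (of $y_0$) gives $s - s_0 = x - x_0$; since $s_0$ and $x_0$ are units, both these difference sets are singletons, so the orders agree. Hence
\begin{displaymath}
\inf_{s_0 \ne s \in X+Y} \ord(s - s_0) \le \inf_{x_0 \ne x \in X} \ord(x - x_0) \le \gamma(X),
\end{displaymath}
and the symmetric argument with $s := x_0 + y$ for $y \in Y \setminus \{y_0\}$ produces the analogous bound by $\gamma(Y)$; passing to the supremum over $s_0 \in (X+Y)^\times$ then delivers $\gamma(X+Y) \le \min(\gamma(X), \gamma(Y))$.

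The rightmost inequality $\gamma(X+Y) \ge \mathfrak p(A)$ is a cleaner version of the same manipulation: for any $s_0 \in (X+Y)^\times$ and any $s \in X+Y$ with $s \ne s_0$, the unit property of $s_0$ makes $s - s_0$ the singleton $\{s + \tilde s_0\} \subseteq A$, whose element has order at least $\mathfrak p(A)$ by definition of the latter. Taking the infimum over $s$ and then the supremum over the (non-empty) set $(X+Y)^\times$ preserves the bound. I expect the only genuinely delicate point in the whole argument to be the claim that units in $X+Y$ decompose into units of $X$ and $Y$; everything else is bookkeeping built on Definition \ref{def:davenport_constant} and the right-cancellation of $y_0$ (respectively $x_0$).
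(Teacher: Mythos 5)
Your overall strategy is sound and runs close to the paper's own: the reduction of units of $X+Y$ to sums of units of $X$ and $Y$ is exactly Lemma \ref{prop:units_distribution} (proved there by the same cancellation trick you use), the first inequality is indeed immediate from Definition \ref{def:davenport_constant}, and both the bound by $\mathfrak p(A)$ and the $\gamma(X)$ half of the middle inequality are correct as you state them: with $s = x + y_0$ and $s_0 = x_0 + y_0$, one has $w + s_0 = s$ if and only if $w + x_0 = x$ by right cancellation of $y_0$, so $s - s_0$ and $x - x_0$ coincide as (singleton) sets and the infima compare as claimed.

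The gap is in the sentence ``the symmetric argument with $s := x_0 + y$ produces the analogous bound by $\gamma(Y)$''. That argument is not actually symmetric, because Definition \ref{def:davenport_constant} is built on the \emph{right} difference set $\ord(x - x_0)$. With $s = x_0 + y$ and $s_0 = x_0 + y_0$, left cancellation of $x_0$ yields $-s_0 + s = -y_0 + y$, an identity between \emph{left} difference sets; the right difference set is instead $s - s_0 = \{x_0 + y + \tilde y_0 + \tilde x_0\} = x_0 + (y - y_0) - x_0$, a conjugate of $y - y_0$ which in a non-commutative monoid is in general a different element. What is true is that the two elements have the same order, but that needs an extra ingredient: either Lemma \ref{lem:duality_order} (namely $\ord(u+v) = \ord(v+u)$ when one summand is cancellable), or the identity $\langle x_0 + y - s_0\rangle = x_0 + \langle y - y_0\rangle - x_0$ combined with Lemma \ref{lem:unital_shifts_vs_measure}. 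This is precisely how the paper proceeds: it proves the $\gamma(Y)$ bound directly via this conjugation identity and then recovers the $\gamma(X)$ bound by duality (Propositions \ref{prop:duality} and \ref{prop:duality_gamma}). In effect you established directly the half that the paper gets for free by duality (the $\gamma(X)$ bound, where the cancellation lands on the correct side) and waved at the half the paper has to work for. Once you insert the justification of $\ord(s - s_0) = \ord(y - y_0)$ by one of the two routes above, your proof is complete.
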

Lemma \ref{lem:simple_lemma} is proved by the end of Section \ref{sec:lemmas} and
applies, on the level of groups, to \textit{any} pair of non-empty subsets. The following example suggests that the  inequalities in the lemma are, in general, rather pessimistic:
\begin{example}
\label{exa:weaker}
Fix an integer $m \ge 2$ and prime numbers $p$, $q$ with $ m < p < q$. Then set $n := m pq$, $X := \{mk \bmod n: k = 0, \ldots, p-1\}$ and $Y := \{mk \bmod n: k = 1, \ldots, p\}$. We have $|X+Y| = 2p$, $\gamma(X) = \gamma(Y) = p q$ and $\gamma(X+Y) = q$, while $\mathfrak p(\mathbb Z/n\mathbb Z)$ is the smallest prime divisor of $m$, where $\mathbb{Z}/n\mathbb{Z}$ is the additive group of the integers modulo $n$. Thus, $\gamma(X,Y) > \gamma(X+Y) > \mathfrak p(\mathbb Z/n\mathbb Z)$, and indeed $ \gamma(X+Y) \gg \mathfrak{p}(\mathbb Z/n\mathbb Z)$ for $q \gg m$, and $\gamma(X,Y) \gg \gamma(X+Y)$ for $p \gg 2$, where the symbol $\gg$ reads ``(comparatively) much larger than''.
\end{example}
Note that, for $X \subseteq A$, it holds $\gamma(X) = 0$ if $X^\times$ is empty, and then $\gamma(\cdot)$ provides no significant information on $\mathbb A$. However, this is not the case, e.g., when $X \ne \emptyset$ and $\mathbb A$ is a group (with the result that $X^\times = X$), which is the ``moral basis'' for the non-trivial bounds below. To start with, we have:
\begin{theorem}\label{th:old_strong_main}
Suppose $\mathbb A$ is cancellative and let $X,Y$ be non-empty finite subsets of $A$ such that $\langle Y \rangle$ is commutative. Then $|X+Y| \ge
\min(\gamma(Y),|X|+|Y|-1)$.
\end{theorem}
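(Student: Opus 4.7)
My plan is to induct on $n := |Y|$, adapting the classical Dyson $e$-transform to the present semigroup setting. The base case $n = 1$ is immediate from cancellativity, which gives $|X+Y| = |X| = |X| + |Y| - 1$.

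For $n \ge 2$ we may assume $Y^\times \ne \emptyset$ (otherwise $\gamma(Y) = 0$ and the bound is trivial). Set $m := \gamma(Y)$ and, using the finiteness of $Y$, pick $y_0 \in Y^\times$ attaining $m = \inf_{y_0 \ne y \in Y} \ord(y - y_0)$. Since $y_0 \in \langle Y \rangle \cap A^\times$, its inverse $-y_0$ commutes with every element of $\langle Y\rangle$ (a standard fact: anything commuting with a unit also commutes with its inverse); consequently $\langle Y - y_0\rangle$ is commutative, associativity yields $X + (Y - y_0) = (X+Y) + (-y_0)$, and cancellativity then gives $|X + (Y - y_0)| = |X+Y|$, while Proposition~\ref{prop:unital_shifts_vs_gamma_equality} ensures $\gamma(Y - y_0) = \gamma(Y)$. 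Replacing $Y$ by $Y - y_0$, we may thus assume $0 \in Y$ and $\ord(y) \ge m$ for every $y \in Y \setminus \{0\}$.

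Now fix $y_1 \in Y \setminus \{0\}$. If $X + y_1 \subseteq X$ (or, symmetrically, $Y + y_1 \subseteq Y$), cancellativity promotes the inclusion to equality, so the $+y_1$-orbits partition the corresponding set into cycles of length $\ord(y_1)$; this forces $y_1$ to be a unit of finite order $\ge m$, yielding $|X| \ge m$ (resp.\ $|Y| \ge m$) and hence $|X+Y| \ge m$. Otherwise, introduce the $e$-transform
\[
X' := X \cup (X + y_1), \qquad Y' := Y \cap (Y - y_1) = \{y \in Y : y + y_1 \in Y\}.
\]
Commutativity of $\langle Y \rangle$ gives $X' + Y' \subseteq X + Y$: for $(x + y_1) + y'$ with $y' \in Y'$ one has $y_1 + y' = y' + y_1 \in Y$, whence $(x + y_1) + y' = x + (y' + y_1) \in X + Y$. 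Moreover $0 \in Y' \subsetneq Y$, $\langle Y'\rangle$ is commutative, and $\gamma(Y') \ge m$; so the induction hypothesis applied to $(X', Y')$ produces $|X + Y| \ge |X' + Y'| \ge \min(m, |X'| + |Y'| - 1)$.

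The principal obstacle is the size balance $|X'| + |Y'| \ge |X| + |Y|$, which is equivalent to $|X \setminus (X + y_1)| \ge |Y \setminus (Y - y_1)|$ and is not automatic for a given $y_1$. To get around this one must either select $y_1 \in Y \setminus \{0\}$ so as to maximize the excess, or invoke a symmetric dual transform (swapping $+$ and $-$ on the two sides and exploiting $|Z \setminus (Z \pm y_1)| = |Z \setminus (Z \mp y_1)|$, valid whenever $y_1$ is a unit), so that at least one of the resulting candidate transforms produces a non-negative size change once the closed-orbit cases $X + y_1 \subseteq X$ and $Y + y_1 \subseteq Y$ are ruled out. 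This combinatorial balancing step is the heart of the proof, and the commutativity hypothesis on $\langle Y \rangle$ is exactly what keeps the inclusion $X' + Y' \subseteq X + Y$ intact throughout the induction.
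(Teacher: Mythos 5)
First, a point of comparison: the paper does not actually prove Theorem~\ref{th:old_strong_main} --- it is imported from \cite{Tring12} (Theorem~8 there), where it is said to be established by ``an appropriate generalization of the same kind of transformation used by Davenport'' --- so your general direction is at least consistent in spirit with the cited argument. The problem is that your write-up stops exactly where the real work begins. Everything up to the transform is fine (the normalization $0 \in Y$, the preservation of commutativity and of $\gamma$ under the unit shift, the inclusion $X' + Y' \subseteq X + Y$, the closed-orbit cases), but the inequality $|X'| + |Y'| \ge |X| + |Y|$ --- which you yourself call ``the heart of the proof'' --- is left unproved, and neither of the two rescue strategies you sketch survives scrutiny. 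The first (``select $y_1$ so as to maximize the excess'') fails outright: take $\mathbb A = \mathbb Z/14\mathbb Z$, let $H := \{0,2,\ldots,12\}$ be the subgroup of order $7$, and set $X := H \cup \{1\}$, $Y := \{0,2,6\}$. Here $0 \in Y$, every non-zero $y \in Y$ has order $7 = \gamma(Y)$, and for \emph{each} of the two admissible choices $y_1 \in \{2,6\}$ one checks that $X + y_1 \not\subseteq X$, $Y + y_1 \not\subseteq Y$, $|(X+y_1)\setminus X| = 1$ and $|(Y+y_1)\setminus Y| = 2$, so that $|X'| + |Y'| = |X| + |Y| - 1$ for every available choice: the maximal excess is strictly negative. (The theorem itself holds here, since $|X+Y| = 10 \ge 7$; it is the proof strategy that breaks.)

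The second suggestion (a dual transform shrinking $X$ to $X \cap (X - y_1)$ and growing $Y$ to $Y \cup (Y + y_1)$) does repair the size balance in this example, but it destroys your induction, which is on $|Y|$: the dual transform strictly increases $|Y|$. Switching to induction on $|X|$ would require guaranteeing $X \cap (X - y_1) \ne \emptyset$ and, more seriously, an inequality of the form $\gamma(Y \cup (Y + y_1)) \ge \gamma(Y)$, which is not covered by Proposition~\ref{prop:unital_shifts_vs_gamma_equality} and is not obvious: enlarging a set enlarges the domain of the inner infimum in Definition~\ref{def:davenport_constant}, and the new elements $y + y_1$ may have small order. So there is a genuine gap, and it is not a routine one to fill: the transform $X \mapsto X \cup (X + y_1)$, $Y \mapsto Y \cap (Y - y_1)$, which compares each set only with its own translate, simply does not come with the exact mass identity that makes the classical Dyson transform $X \mapsto X \cup (Y + e)$, $Y \mapsto Y \cap (X - e)$ work, and a correct proof has to be built around a transform for which the counting is automatic rather than hoped for.
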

The result appears as Theorem 8 in \cite{Tring12} and leads us to the following:
\begin{conjecture}
\label{conj:cauchy_davenport}
Let $X_1,\ldots, X_n$ be subsets of $A$ with $|X_i| \ge 2$ for each $i$. If $\mathbb A$ is cancellative, then $|X_1+\cdots+X_n| \ge \min(\gamma(X_1,\ldots, X_n),|X_1|+\cdots+|X_n|+1-n)$.
\end{conjecture}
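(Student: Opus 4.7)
The natural strategy is induction on $n$, the base case $n=1$ being trivial since $|X_1|\ge\min(\gamma(X_1),|X_1|)$ regardless of whether $\gamma(X_1)$ is finite. All of the combinatorial difficulty therefore concentrates in the $n=2$ case
\begin{equation*}
|X+Y|\ge\min\bigl(\gamma(X,Y),\,|X|+|Y|-1\bigr)=\min\bigl(\max(\gamma(X),\gamma(Y)),\,|X|+|Y|-1\bigr),
\end{equation*}
which is itself a genuine strengthening of Theorem \ref{th:new_strong_main}: by Lemma \ref{lem:simple_lemma} we only have $\gamma(X+Y)\le\min(\gamma(X),\gamma(Y))\le\gamma(X,Y)$, and Example \ref{exa:weaker} shows that the gap between $\gamma(X+Y)$ and $\gamma(X,Y)$ can be made arbitrarily large. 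My plan for this step is to go back to the Cauchy-Davenport transforms of Section \ref{sec:lemmas} and run the transform around a base point $x_0\in X^\times$ (or symmetrically $y_0\in Y^\times$) that \emph{witnesses} the supremum defining $\gamma(X)$ (respectively $\gamma(Y)$) in Definition \ref{def:davenport_constant}, rather than letting the base point be dictated by the sumset $X+Y$ as in the proof of Theorem \ref{th:new_strong_main}. Proposition \ref{prop:unital_shifts_vs_gamma_equality} is crucial here: by translating through a unit in the unitization $\mathbb A^{(0)}$, one may assume that $0$ lies in whichever of $X,Y$ carries the larger Cauchy-Davenport constant, so the chosen base point becomes the identity and the iteration of the transform can be organized around it without disturbing any of the $\gamma$-quantities in play.

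For the inductive step, assuming the $n=2$ case is in hand, I would pick an index $k^\ast$ realizing $\gamma(X_{k^\ast})=\gamma(X_1,\ldots,X_n)$ and split the product at $k^\ast$ into $A=X_1+\cdots+X_{k^\ast-1}$ and $B=X_{k^\ast}+\cdots+X_n$ (omitting $A$ and proceeding on $B$ alone if $k^\ast=1$). The $n=2$ case applied to $(A,B)$ provides $|A+B|\ge\min(\gamma(A,B),\,|A|+|B|-1)$. If the bound is realized by the combinatorial summand, repeated appeal to the inductive hypothesis on the shorter products $A$ and $B$ separately yields $|X_1+\cdots+X_n|\ge\sum|X_i|-(n-1)$ and we are done. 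If instead the bound is realized by $\gamma(A,B)$, one must argue that the splitting may be iteratively refined, pushing $X_{k^\ast}$ towards one of the endpoints of the product by successive applications of the $n=2$ case and Lemma \ref{lem:simple_lemma}, until $\gamma(X_{k^\ast})$ is actually seen on the relevant side. This bookkeeping is delicate but purely formal once the $n=2$ case has been established.

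The principal obstacle is, unambiguously, the $n=2$ case. Everything else reduces to a controlled iteration of that one inequality against the inductive hypothesis, whereas the transform-based proof of Theorem \ref{th:new_strong_main} presently only controls $\gamma(X+Y)$. Closing the gap to $\gamma(X,Y)$ appears to require a genuinely new mechanism, either a modification of the Cauchy-Davenport transforms that does not decrease $\max(\gamma(X),\gamma(Y))$ at any step of the reduction, or an infusion of the isoperimetric and Kneser-type techniques alluded to by the author in the introduction. It is precisely because no such mechanism is currently within reach that the statement is framed here as a conjecture rather than as a theorem.
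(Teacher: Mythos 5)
The statement you were asked to prove is Conjecture~\ref{conj:cauchy_davenport}, and the paper contains no proof of it: the author states explicitly that he does not have a proof ``not even in the case of two summands,'' offering only the special cases covered by Theorem~\ref{th:old_strong_main} (commutative $\langle Y\rangle$) and Corollary~\ref{cor:kemperman_generalized}. So there is no argument in the paper to compare yours against, and your own submission is, by its closing admission, a strategy sketch rather than a proof. Your diagnosis of where the difficulty lies is accurate and matches the author's: the whole problem is the two-summand inequality $|X+Y|\ge\min\bigl(\max(\gamma(X),\gamma(Y)),|X|+|Y|-1\bigr)$, which Theorem~\ref{th:new_strong_main} only delivers with the weaker constant $\gamma(X+Y)$, and Example~\ref{exa:weaker} shows the loss is real. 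But your proposed mechanism for closing that gap does not get off the ground: re-centering the transform at a unit $x_0$ witnessing the supremum in $\gamma(X)$ changes nothing essential, because Proposition~\ref{prop:unital_shifts_vs_gamma_equality} already makes $\gamma$ invariant under \emph{all} unital shifts, so the choice of base point is free in the existing proof too. The obstruction is elsewhere: the surgery in Cases 1 and 2 of the proof of Theorem~\ref{th:new_strong_main} replaces $(X,Y)$ by $(\bar X,\bar Y)$ with $\bar X+\bar Y\subseteq X+Y$, which preserves a lower bound on $\gamma(\bar X+\bar Y)$ via \eqref{equ:special_form_of_gamma}, but gives no control whatsoever on $\gamma(\bar X)$ or $\gamma(\bar Y)$ individually --- adding the translated block $X_0+\bar z$ to $X$, or deleting $Y_0$ from $Y$, can destroy $\max(\gamma(\bar X),\gamma(\bar Y))$. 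You would need a genuinely new invariant, and you name none.

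A secondary point: your claim that the inductive step is ``purely formal once the $n=2$ case has been established'' is overconfident. Since $\mathbb A$ need not be commutative, $X_{k^\ast}$ cannot be permuted to an endpoint, and Lemma~\ref{lem:simple_lemma} only gives $\gamma(A+B)\le\min(\gamma(A),\gamma(B))$, so whenever you group $X_{k^\ast}$ into a block the constant you can certify for that block may drop strictly below $\gamma(X_{k^\ast})$. Worse, the inductive hypothesis applied to a shorter product $X_1+\cdots+X_{n-1}$ may be saturated on its own $\gamma$-branch at the level $\max_{i<n}\gamma(X_i)$, which can be smaller than both $\gamma(X_1,\ldots,X_n)$ and $\sum_i|X_i|+1-n$; feeding that into the two-summand inequality for $(X_1+\cdots+X_{n-1},X_n)$ then yields a bound of the form $\max_{i<n}\gamma(X_i)+|X_n|-1$, which need not reach the conjectured minimum. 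So even modulo the $n=2$ case the reduction requires an idea you have not supplied. None of this is a criticism of your honesty --- you correctly flag the statement as open --- but as a proof the proposal establishes nothing beyond what Theorem~\ref{th:new_strong_main} and Lemma~\ref{lem:simple_lemma} already give.
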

We do not know how to prove the conjecture, which can however be confirmed in some special cases, see Corollary \ref{cor:kemperman_generalized} below or consider Theorem \ref{th:old_strong_main}  when $\mathbb A$ is commutative. (Note that the restriction on the size of the summands is necessary, as otherwise the conjecture is easily disproved for $n \ge 3$.) Instead, we have the following theorem, which is the main contribution of the paper:
\begin{theorem}\label{th:new_strong_main}
Let $X, Y$ be subsets of $ A$ and suppose that $\mathbb A$ is cancellative. Then $|X+Y| \ge \min(\gamma(X+Y), |X| + |Y| - 1)$.
\end{theorem}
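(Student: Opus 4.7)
The strategy is to reduce the theorem, by translation, to a single-base-point statement, and then to establish the latter by a transformation argument of the kind developed in Section~\ref{sec:lemmas}. First I would dispose of the trivial cases: if $X=\emptyset$ or $Y=\emptyset$, the right-hand side is at most $0$ and there is nothing to prove; if $(X+Y)^\times=\emptyset$ (in particular, if $\mathbb A$ fails to be a monoid), then $\gamma(X+Y)=0$ and the bound reads $|X+Y|\ge 0$. So I may assume $\mathbb A$ is a cancellative monoid with identity $0$ and that some $z_0=x_0+y_0$ lies in $(X+Y)^\times$. From $x_0+(y_0+\tilde{z}_0)=0$ one sees that $r:=y_0+\tilde{z}_0$ is a right inverse of $x_0$, and then $x_0+r+x_0=x_0+0$ together with left cancellation promotes $r$ to a two-sided inverse; a symmetric argument applies to $y_0$. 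Hence $x_0,y_0\in\mathbb A^\times$, and replacing $X$ by $\tilde{x}_0+X$ and $Y$ by $Y+\tilde{y}_0$ preserves cardinalities, places $0$ in both sets, and (by associativity and two applications of Proposition~\ref{prop:unital_shifts_vs_gamma_equality}) preserves $\gamma(X+Y)$. So I may assume throughout that $0\in X\cap Y$.

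After this reduction it suffices to prove the pointwise inequality
\[
|X+Y|\ \ge\ \min\!\bigl(\,\inf_{0\ne z\in X+Y}\ord(z),\ |X|+|Y|-1\,\bigr),
\]
since taking the supremum of the right-hand side over all base-points $z_0\in(X+Y)^\times$ restores $\gamma(X+Y)$. I would prove this by induction on $|Y|$. If $|Y|=1$, then $Y=\{0\}$ and $|X+Y|=|X|=|X|+|Y|-1$ by cancellativity. For the inductive step, assume $|X+Y|<|X|+|Y|-1$; the task is to produce some $y\in(X+Y)\setminus\{0\}$ with $\ord(y)\le|X+Y|$. I would select $y\in Y\setminus\{0\}$ and apply a Cauchy-Davenport transform from Section~\ref{sec:lemmas} to pass to a pair $(X',Y')$ satisfying $X'+Y'\subseteq X+Y$, $0\in X'\cap Y'$, $|X'|+|Y'|\ge|X|+|Y|$, and either $|Y'|<|Y|$ (so the inductive hypothesis applies and the conclusion transfers) or a saturation condition such as $X+y=X$ holds. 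In the saturated case, $0\in X$ forces $ky\in X\subseteq X+Y$ for every $k\ge 1$, whence $\langle y\rangle\subseteq X+Y$ and $\ord(y)\le|X+Y|$; since $y\ne 0$, this supplies the element required to bound the infimum.

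The hard part, I expect, is the construction of the transform and its cardinality analysis in the non-commutative, non-group setting: the classical Dyson/Davenport recipes rely on symmetric differences controlled via inverses, but in a cancellative monoid one must interpret the shifted sets purely as the preimages $-y+X$ and $Y-y$ defined in Section~\ref{sec:sgrps}, and then verify $|X'|+|Y'|\ge|X|+|Y|$ by a direct injection using cancellativity alone. Equally delicate is arranging that the saturated alternative really yields a small-order element \emph{inside $X+Y$}, rather than in some larger ambient subsemigroup; this is precisely what the shift-invariance of $\gamma$ in Proposition~\ref{prop:unital_shifts_vs_gamma_equality} is designed to make possible.
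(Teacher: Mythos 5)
Your reductions are sound and coincide with the paper's own preprocessing: the trivial cases, the observation that $(X+Y)^\times\ne\emptyset$ forces $X^\times\ne\emptyset\ne Y^\times$ (this is Lemma \ref{prop:units_distribution}), the normalization $0\in X\cap Y$ by unital shifts, and the passage from $\gamma(X+Y)$ to the single-base-point quantity $\inf_{0\ne z\in X+Y}\ord(z)$ (Corollary \ref{cor:exists_invariant_n_transform}). The transfer logic you indicate for the inductive step is also correct: if $X'+Y'\subseteq X+Y$ with $0\in X'+Y'$, then every non-zero element of $X'+Y'$ is a non-zero element of $X+Y$, so the order bound propagates upward.

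But the heart of the proof --- the transform itself --- is exactly the part you leave open (``the hard part, I expect, is the construction of the transform''), and the dichotomy you sketch is the one that does \emph{not} survive the loss of commutativity. Inducting on $|Y|$ with the alternative ``either $|Y'|<|Y|$ or $X+y=X$ for the chosen $y$'' is the classical Davenport recipe; in this paper that recipe only yields Theorem \ref{th:old_strong_main}, which requires $\langle Y\rangle$ commutative, precisely because in a non-commutative cancellative monoid one cannot in general shrink $Y$ while enlarging $X$ and still keep the new sumset inside $X+Y$. The paper's construction is genuinely two-sided: it sets $Z:=X\cap Y$ (which has at least two elements once $|X+Y|\le|X|+|Y|-2$, since $X,Y\subseteq X+Y$ after normalization), takes the least $n$ with $X+nZ+Y\not\subseteq X+Y$ together with a witness $\bar z\in nZ$, forms $X_0:=\{x\in X: x+\bar z+Y\not\subseteq X+Y\}$ and $Y_0:=\{y\in Y: X+\bar z+y\not\subseteq X+Y\}$, and then --- according to whether $|X_0|\ge|Y_0|$ or not --- either adjoins $X_0+\bar z$ to $X$ and deletes $Y_0$ from $Y$, or deletes $X_0$ from $X$ and adjoins $\bar z+Y_0$ to $Y$. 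Because the transform may enlarge either coordinate, the induction cannot run on $|Y|$: the paper uses a minimax induction, first on $|X+Y|$, then (for fixed sumset size) maximizing $|X|+|Y|$, then maximizing $|X|$. Your ``saturated'' branch, where $\langle y\rangle\subseteq X+Y$ forces $\ord(y)\le|X+Y|$, does correspond to the paper's intermediate Claim that $\langle Z\rangle\subseteq X+Y$ would contradict $\kappa<\gamma(X+Y)$; but without a working non-saturated branch the argument does not close. As written, the proposal is a correct reduction plus an acknowledged hole where the theorem's actual content lives.
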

It is worth comparing Theorems \ref{th:old_strong_main} and \ref{th:new_strong_main}: On the one hand, the latter is ``much stronger'' than the former, for it does no longer depend on commutativity. Yet on the other hand, the former is ``much stronger'' than the latter, since for $X,Y \subseteq A$ we are now replacing $\gamma(X,Y)$ in Theorem \ref{th:old_strong_main} with $\gamma(X+Y)$, and it has been already observed (Example \ref{exa:weaker}) that this means, in general, a notably weaker bound.

With that said, Theorem \ref{th:new_strong_main} is already strong enough to allow for a strengthening of the following result, as implied by Lemma \ref{lem:simple_lemma} and Example \ref{exa:weaker}:
\begin{theorem}[Hamidoune-K\'arolyi theorem]
\label{th:karolyi_finite}
If $\mathbb A$ is a group and $X,Y$ are non-empty subsets of
$A$, then $|X+Y| \ge \min(\mathfrak{p}(A), |X| + |Y|-1)$.
\end{theorem}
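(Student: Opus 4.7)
The plan is to derive the inequality as an immediate formal consequence of Theorem~\ref{th:new_strong_main} and Lemma~\ref{lem:simple_lemma}, exactly as announced in the paragraph preceding the statement. No additional substantive argument is needed.

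Since $\mathbb A$ is assumed to be a group, we have $A^\times = A$, so $X^\times = X$ and $Y^\times = Y$; in particular, $X^\times + Y^\times = X + Y$ is non-empty by the hypothesis that $X$ and $Y$ are both non-empty, and $\mathbb A$ is automatically cancellative. Applying Theorem~\ref{th:new_strong_main} to the pair $(X, Y)$ yields
\[
|X + Y| \;\ge\; \min\bigl(\gamma(X+Y),\; |X| + |Y| - 1\bigr),
\]
while Lemma~\ref{lem:simple_lemma}, whose hypotheses are met by the preceding observations, gives the comparison $\gamma(X + Y) \ge \mathfrak p(A)$. Monotonicity of $\min$ in its first argument then produces
\[
|X+Y| \;\ge\; \min\bigl(\mathfrak p(A),\; |X| + |Y| - 1\bigr),
\]
which is the desired conclusion.

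The only thing to verify in executing this plan is that Lemma~\ref{lem:simple_lemma} applies in the present setting, and this verification is essentially trivial, as noted above; all the genuine work has been absorbed into the proofs of Theorem~\ref{th:new_strong_main} and Lemma~\ref{lem:simple_lemma}, each of which is handled elsewhere in the paper. Hence there is no real obstacle at this step. It is worth noting that Example~\ref{exa:weaker} already shows $\gamma(X+Y)$ can be arbitrarily larger than $\mathfrak p(A)$, so the intermediate bound $|X+Y| \ge \min(\gamma(X+Y), |X|+|Y|-1)$ supplied by Theorem~\ref{th:new_strong_main} is a genuine refinement of the Hamidoune-K\'arolyi inequality even within the group setting, confirming the strengthening claim made in the discussion preceding the statement.
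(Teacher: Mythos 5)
Your proposal is correct and follows exactly the route the paper itself takes: Theorem~\ref{th:karolyi_finite} is obtained as the two-summand case of Corollary~\ref{cor:karolyi_generalized}, i.e.\ by applying Theorem~\ref{th:new_strong_main} and then weakening $\gamma(X+Y)$ to $\mathfrak p(A)$ via Lemma~\ref{lem:simple_lemma}, whose hypotheses you correctly check are automatic for non-empty subsets of a group. Nothing is missing.
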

The result is more or less straightforward in the commutative case (by Kneser's theorem), and G. K\'arolyi proved it for finite groups in 2005 (based on the Feit-Thompson theorem). A proof of the general statement (relying on the isoperimetric method) was then communicated by H.~O.~Hamidoune to K\'arolyi during the peer-review process of \cite{Karo05.2}, where it was finally included, see \cite[p. 242]{Karo05.2}.
However, K\'arolyi himself pointed out to the author, as recently as July 2013, that a simpler approach comes from a Kneser-type result of J.~E.~Olson \cite[Theorem 2]{Olson}, based on Kemperman's transform. And another argument along the same lines
was mentioned by I.~Ruzsa in a private communication in June 2013.

As a matter of fact, also our proof of Theorem \ref{th:karolyi_finite} is basically a transformation proof, close in the spirit to Olson's proof, and it comes as a  consequence of Theorem \ref{th:new_strong_main} in view of Lemma \ref{lem:simple_lemma}. Specifically, we have:
\begin{corollary}
\label{cor:karolyi_generalized}
Pick subsets $X_1,\ldots, X_n$ of $A$ such that $X_1^\times+\cdots+X_n^\times \ne \emptyset$. If $\mathbb A$ is cancellative, then $|X_1 + \cdots + X_n| \ge \min(\mathfrak p(A), |X_1| + \cdots + |X_n| + 1 - n)$.
\end{corollary}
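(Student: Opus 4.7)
The plan is to argue by induction on $n$, leveraging Theorem \ref{th:new_strong_main} on the split $(X_1 + \cdots + X_{n-1}) + X_n$ and applying Lemma \ref{lem:simple_lemma} to replace the Cauchy-Davenport constant of the sumset by a bound involving $\mathfrak{p}(A)$. The case $n = 1$ is vacuous.

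For the inductive step, set $Y := X_1 + \cdots + X_{n-1}$. The first substantive step is to verify that the hypothesis of Lemma \ref{lem:simple_lemma} holds for the pair $(Y, X_n)$, i.e.\ that $Y^\times + X_n^\times \neq \emptyset$. From $X_1^\times + \cdots + X_n^\times \ne \emptyset$ I read off that $X_i^\times$ is non-empty for every $i$, so $\mathbb{A}$ is a monoid; and since $\mathbb{A}^\times$ is closed under the operation (the inverse of $u_1 + \cdots + u_k$ being the reversed sum of the individual inverses), every element of $X_1^\times + \cdots + X_{n-1}^\times$ lies in $Y \cap \mathbb{A}^\times = Y^\times$. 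In particular $X_1^\times + \cdots + X_n^\times \subseteq Y^\times + X_n^\times$, which is therefore non-empty, and the same reasoning shows that $X_1^\times + \cdots + X_{n-1}^\times \neq \emptyset$, so that the inductive hypothesis applies to $X_1, \ldots, X_{n-1}$.

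With this in place, Theorem \ref{th:new_strong_main} gives $|X_1 + \cdots + X_n| = |Y + X_n| \ge \min\bigl(\gamma(Y + X_n),\, |Y| + |X_n| - 1\bigr)$, while Lemma \ref{lem:simple_lemma} upgrades $\gamma(Y + X_n)$ to $\mathfrak{p}(A)$, yielding $|X_1 + \cdots + X_n| \ge \min(\mathfrak{p}(A),\, |Y| + |X_n| - 1)$. The induction hypothesis provides $|Y| \ge \min(\mathfrak{p}(A),\, |X_1| + \cdots + |X_{n-1}| + 2 - n)$, and a routine case split on which of the two terms attains this minimum, using only $|X_n| \ge 1$, delivers $|Y| + |X_n| - 1 \ge \min(\mathfrak{p}(A),\, |X_1| + \cdots + |X_n| + 1 - n)$. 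Combining the two lower bounds closes the induction. I do not anticipate any genuine obstacle: the whole argument is bookkeeping around the two substantial inputs, and the only point requiring a moment's care is the closure of $\mathbb{A}^\times$ under the operation, which is what keeps Lemma \ref{lem:simple_lemma} applicable at every step.
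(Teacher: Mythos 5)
Your proposal is correct and follows essentially the same route as the paper: induction on $n$, applying Theorem \ref{th:new_strong_main} to the split $(X_1+\cdots+X_{n-1})+X_n$ and then Lemma \ref{lem:simple_lemma} to pass from $\gamma(X_1+\cdots+X_n)$ to $\mathfrak p(A)$. Your explicit verification that $Y^\times+X_n^\times\ne\emptyset$ is exactly the content of Lemma \ref{prop:units_distribution}, which the paper invokes at the corresponding point.
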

Theorem \ref{th:new_strong_main} and Corollary \ref{cor:karolyi_generalized} are proved in Section \ref{sec:main_results}. As a consequence, we get:
\begin{corollary}
\label{cor:kemperman_generalized}
Given $X_1,\ldots, X_n \subseteq A$ such that $X_1^\times+\cdots+X_n^\times \ne \emptyset$, let $\kappa := |X_1| + \cdots + |X_n| + 1 - n$ and assume $\ord(x) \ge \kappa$ for every $x \in A \setminus \{0\}$. If $\mathbb A$ is cancellative, then $|X_1 + \cdots + X_n| \ge \kappa$.
\end{corollary}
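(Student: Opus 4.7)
The plan is to deduce Corollary \ref{cor:kemperman_generalized} directly from Corollary \ref{cor:karolyi_generalized}. The key observation is that the hypothesis ``$\ord(x) \ge \kappa$ for every $x \in A \setminus \{0\}$'' translates immediately into the lower bound $\mathfrak{p}(A) \ge \kappa$. To make this precise, I would first note that $X_1^\times + \cdots + X_n^\times \ne \emptyset$ forces each $X_i^\times$ to be non-empty, and in particular $\mathbb{A}^\times \ne \emptyset$, so $\mathbb{A}$ is necessarily a monoid; consequently $\mathbb{A}^{(0)} = \mathbb{A}$, the identity $0$ is unambiguously an element of $A$, and $\mathfrak{p}(A)$ coincides with the infimum of $\ord_\mathbb{A}(x)$ as $x$ ranges over $A \setminus \{0\}$. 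The hypothesis then gives $\mathfrak{p}(A) \ge \kappa$ on the nose.

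With this in hand, I would simply invoke Corollary \ref{cor:karolyi_generalized}: its hypotheses, namely that $\mathbb{A}$ is cancellative and $X_1^\times + \cdots + X_n^\times \ne \emptyset$, are exactly what we have assumed, so it yields
\[
|X_1 + \cdots + X_n| \ge \min\bigl(\mathfrak{p}(A),\, \kappa\bigr).
\]
Since $\mathfrak{p}(A) \ge \kappa$ by the previous step, the right-hand side is precisely $\kappa$, which is the desired inequality.

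There is essentially no obstacle here: the content of the corollary is already encapsulated in Corollary \ref{cor:karolyi_generalized}, and the present statement is merely a clean specialization in which the structural parameter $\mathfrak{p}(A)$ is controlled by an assumption that is often much easier to verify. In particular, the classical torsion-free case of Kemperman's inequality is recovered immediately, since in any torsion-free group every non-identity element has infinite order, and so $\ord(x) = \infty \ge \kappa$ holds trivially whatever the value of $\kappa$. This also explains why the author chose to omit the details.
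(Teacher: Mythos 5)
Your proof is correct and is exactly the argument the paper has in mind: the paper itself declares the corollary ``straightforward by Corollary \ref{cor:karolyi_generalized} (we may omit the details)'', and your two steps --- translating the hypothesis $\ord(x) \ge \kappa$ for $x \in A \setminus \{0\}$ into $\mathfrak p(A) \ge \kappa$ and then collapsing the minimum in Corollary \ref{cor:karolyi_generalized} --- are precisely those omitted details. Your reading of $\mathfrak p(A)$ as an infimum over $A \setminus \{0\}$ is the intended one (it matches the abstract's ``minimal order of the non-trivial subsemigroups'', and is what every application of $\mathfrak p(A)$ in the paper requires), so no further comment is needed.
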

This is immediate by Corollary \ref{cor:karolyi_generalized} and generalizes an inequality by J.~H.~B.~Kemperman, see \cite[p. 251]{Kemp56}, a special case of which, see \cite[Corollary 3.3.3]{Hami09}, is often referred to as Kemperman's inequality for torsion-free groups. (Incidentally, \cite{Kemp56} is mainly focused on cancellative
semigroups, there simply called semigroups.)
\section{Properties of the Cauchy-Davenport constant}
\label{sec:lemmas}
Throughout, we derive properties of the Cauchy-Davenport constant, most of which will be used later to
prove Theorem \ref{th:new_strong_main} and Corollary \ref{cor:karolyi_generalized}. In this sense, some results of the present section (in particular, Corollary \ref{cor:units_give_invariant_n_transform}) could have been stated and proved in a less general form. However, we believe that would not have really simplified the exposition, and at the same time we hope that, in the way they are actually presented, they can help to prove or refine Conjecture \ref{conj:cauchy_davenport}.

The following proposition, on the other hand, is readily adapted from the case of groups, but we have no standard reference for it in the context of semigroups.
\begin{proposition}\label{prop:trivialities}
Let $X_1, Y_1,\ldots, X_n, Y_n, Z$ be subsets of $A$. Then:
\begin{enumerate}[label={\rm (\roman{*})}]
\item\label{item:trivialieties_(i)} If $X_i \subseteq Y_i$ for each $i$, then $\sum_{i=1}^n X_i \subseteq \sum_{i=1}^n Y_i$ and $\big|\sum_{i=1}^n X_i\big| \leq \big|\sum_{i=1}^n Y_i\big|$.
\item\label{item:trivialieties_(ii)} $\big|\sum_{i=1}^n X_i\big| \le \prod_{i=1}^n |X_i|$, and in addition $\big|\sum_{i=1}^n X_i\big| \ge \max(|X_1|, \ldots, |X_n|)$ if $X_i$ contains at least one cancellable element for each $i$.
\item\label{item:trivialieties_(iib)} If $z \in A$ is cancellable, then $|x+Z|=|Z+x|=|Z|$.
\item\label{item:trivialieties_(iii)} If $z_1, \ldots, z_n \in A^\times$, then $\big|\sum_{i=1}^n X_i\big| = \big|\sum_{i=1}^n (z_{i-1} + X_i - z_i)\big|$.
\end{enumerate}
\end{proposition}
\begin{proof}
\ref{item:trivialieties_(i)} is trivial, while all the rest follows from considering that units are cancellable elements and, for a cancellable element $z \in A$, both of the functions $A \to A: x \mapsto x + z$ and $A \to A: x \mapsto z + x$ are bijective.
\end{proof}
The next lemma and the subsequent remark will prove useful at various points (in particular, the general case with $n$ summands is used in Corollary \ref{cor:karolyi_generalized}).
\begin{lemma}\label{prop:units_distribution}
If $X_1, \ldots, X_n \subseteq A$, then $X_1^\times + \cdots + X_n^\times \subseteq (X_1 + \cdots + X_n)^\times$, and the inclusion is, in fact, an equality if $\mathbb A$ is cancellative.
\end{lemma}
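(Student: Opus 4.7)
The plan is to reduce everything to the case $n = 2$ by a straightforward induction, after first disposing of the trivial case where $\mathbb{A}$ is not a monoid: by the convention $\mathbb{A}^\times = \emptyset$ recorded in Section~\ref{sec:sgrps}, both sides of the claimed inclusion collapse to the empty set in that scenario. So in what follows I assume that $\mathbb{A}$ is a monoid with identity $0$.

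For the forward inclusion in the $n = 2$ case, if $x \in X^\times$ and $y \in Y^\times$ have inverses $\tilde{x}, \tilde{y}$, then $x + y$ lies in $X + Y$ and admits $\tilde{y} + \tilde{x}$ as a (two-sided) inverse, by a direct computation using associativity. Hence $x + y \in (X+Y)^\times$. An induction on $n$ then upgrades this to the stated inclusion for arbitrary $n$, by combining it with the inductive hypothesis applied to $X_2, \ldots, X_n$.

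For the reverse inclusion under cancellativity, it likewise suffices (by the same induction scheme) to treat $n = 2$. Fix $z \in (X+Y)^\times$, write $z = x + y$ with $x \in X$ and $y \in Y$, and let $\tilde{z}$ denote the inverse of $z$. Associativity gives $x + (y + \tilde{z}) = z + \tilde{z} = 0$, so $v := y + \tilde{z}$ is a right inverse of $x$; dually, $\tilde{z} + x$ is a left inverse of $y$. The key observation is that in a cancellative monoid a one-sided inverse is automatically two-sided: from $x + v = 0$ one has $v + x + v = v + 0 = 0 + v$, and right-cancelling $v$ yields $v + x = 0$. Applying this observation (and its dual) to $x$ and to $y$ gives $x \in X^\times$ and $y \in Y^\times$, so $z \in X^\times + Y^\times$ as required.

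The main obstacle is really just bookkeeping: one must keep track of the fact that in the non-commutative setting inverses compose in the reverse order, and that the very definition of $X^\times$ as $X \cap \mathbb{A}^\times$ makes the statement vacuous whenever $\mathbb{A}$ fails to be unital. Cancellativity is used exactly where one upgrades a one-sided inverse to a full one, and this use is essential: in a general (non-cancellative) monoid it can happen that $x + y$ is a unit while neither summand is.
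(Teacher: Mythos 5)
Your proof is correct and follows essentially the same route as the paper: induction reduces to $n=2$, the forward inclusion is the standard computation with $\tilde y + \tilde x$, and the reverse inclusion defines the candidate one-sided inverses $y+\tilde z$ and $\tilde z + x$ and upgrades them to two-sided inverses by cancellation. The only cosmetic difference is that you isolate the general fact that one-sided inverses are two-sided in a cancellative monoid, whereas the paper verifies the same identities inline by cancelling $y$ and $x$ directly.
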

\begin{proof}
The assertion is obvious for $n = 1$, so it is enough to prove it for $n = 2$, since then the conclusion follows by induction. For, let $X,Y$ be subsets of $A$.

Suppose first that $z \in X^\times + Y^\times$ (which means, in particular, that $\mathbb A$ is unital), viz. there exist $x \in X^\times$ and $y \in Y^\times$ such that $z = x + y$. If $\tilde x$ is the inverse of $x$ (in $\mathbb A$) and $\tilde y$ is the inverse of $y$, then it is immediate that $\tilde y + \tilde x$ is the inverse of $x + y$, and hence $x+y \in (X+Y)^\times$. It follows that $X^\times + Y^\times \subseteq (X + Y)^\times$.

Now assume that $\mathbb A$ is cancellative and pick $z \in (X + Y)^\times$; we have to show that $z \in X^\times + Y^\times$. Let $\tilde z$ be the inverse of $z$, and pick $x \in X$ and $y \in Y$ such that $z = x+y$. We define $\tilde x := y + \tilde z$ and $\tilde y := \tilde z + x$. It is seen that $x + \tilde x = (x+y) + \tilde z = 0$ and $\tilde y + y = \tilde z + (x+y) = 0$. Also, $(\tilde x + x) + y = y + \tilde z + (x + y) = y$ and $x + (y + \tilde y) = (x + y) + \tilde z + x = x$, from which we get, by cancellativity, $\tilde x + x = y + \tilde y = 0$. This implies that $z \in X^\times + Y^\times$, and so we are done.
\end{proof}
\begin{remark}
\label{rem:inverse_of_sum_of_units}
The proof of Lemma \ref{prop:units_distribution} shows that, if $x_1, \ldots, x_n \in A^\times$ and $\tilde x_i$ is the inverse of $x_i$, then $\tilde x_n + \cdots + \tilde x_1$ is the inverse of $x_1 + \cdots + x_n$. This is a standard fact about groups, which goes through verbatim for monoids.
\end{remark}
The next proposition proves that the Cauchy-Davenport constant of a set is invariant under translation by units.
\begin{proposition}\label{prop:unital_shifts_vs_gamma_equality}
Let $z \in A^\times$ and $X \subseteq A$. Then $\gamma(X) = \gamma(X + z) = \gamma(z + X)$.
\end{proposition}
\begin{proof}
Denote by $\tilde z$ the inverse of $z$. It is enough to show that $\gamma(X) \le \gamma(X + z)$, as this will in turn imply that $
\gamma(X) \le \gamma(X + z) \le \gamma((X + z) + \tilde z) = \gamma(X)$. (The other equality is proved similarly, and we omit further details.)

For, it follows from Lemma \ref{prop:units_distribution} that $X^\times + z \subseteq (X + z)^\times$, and thus
\begin{equation}
\label{equ:unital_2}
\gamma(X + z) = \sup_{w_0 \in (X + z)^\times} \inf_{w_0 \ne w \in X + z} \ord(w - w_0)
              \ge \sup_{w_0 \in X^\times + z} \inf_{w_0 \ne w \in X + z} \ord(w - w_0).
\end{equation}
But $w \in X + z$ if and only if $w = x + z$ for some $x \in X$, and in fact $w \in X^\times + z$ if and only if $x \in X^\times$. Also, given $x_0 \in X^\times$ and $x \in X$, it holds $x + z = x_0 + z$ if and only if $x = x_0$. It is hence immediate from \eqref{equ:unital_2} and Remark \ref{rem:inverse_of_sum_of_units} that
\begin{equation*}
\gamma(X + z) \ge \sup_{x_0 \in X^\times} \inf_{x_0+z \ne w \in X+z} \ord(w + \tilde z - x_0)
              = \sup_{x_0 \in X^\times} \inf_{x_0 \ne x \in X} \ord(x - x_0) = \gamma(X),
\end{equation*}
which is enough to complete the proof.
\end{proof}
Now we define an \textit{invariant $n$-transform} of $\mathbb A$ to be any $n$-tuple $(T_1,\ldots, T_n)$ of functions on the powerset of $A$, herein denoted by $\mathcal P(A)$, such that
$$|X_1+\cdots+X_n| = |T_1(X_1) + \cdots + T_n(X_n)|$$
and
$${\gamma(X_1+\cdots+X_n)} = {\gamma(T_1(X_1) + \cdots + T_n(X_n))}$$
for all non-empty $X_1, \ldots, X_n \in \mathcal P(A)$.

An interesting case occurs when each $T_i$ is a function of the type
$\mathcal P(A) \to \mathcal P(A): X \to z_l + X + z_r$
for some $z_l, z_r \in A^\times$, as is implied by the following corollary, which ultimately reduces Conjecture \ref{conj:cauchy_davenport} and Theorem \ref{th:new_strong_main} to a ``normalized form''.
\begin{corollary}
\label{cor:units_give_invariant_n_transform}
Let $\mathbb A$ be a monoid, and let $X_1, \ldots, X_n \subseteq A$ such that $X_1^\times + \cdots + X_n^\times \ne \emptyset$. There then exists an invariant $n$-transform $\mathbf T = (T_1, \ldots, T_n)$ such that $0 \in \bigcap_{i=1}^n T_i(X_i)$. Moreover, if $\mathbb A$ is cancellative and $X_1^\times + \cdots + X_n^\times$ is finite, then $\bf T$ can be chosen in such a way that
\begin{equation}
\label{equ:claimed_invariance}
\gamma(T_1(X_1) + \cdots + T_n(X_n)) = \min_{0 \fixed[0.15]{ \text{ }} \ne \fixed[0.15]{ \text{ }} w \fixed[0.15]{ \text{ }} \in \fixed[0.15]{ \text{ }} T_1(X_1) + \cdots + T_n(X_n)} \ord(w).
\end{equation}
\end{corollary}
\begin{proof}
For each $i = 1, \ldots, n$ pick a unit $x_i \in X_i^\times$, using that $X_1^\times + \cdots + X_n^\times$ is non-empty (and hence $X_i^\times \ne \emptyset$), and let $T_i$ be the function $\mathcal P(A) \to \mathcal P(A): X \mapsto z_{i-1} + X - z_i$,
where $z_0 := 0$ and $z_i := x_1 + \cdots + x_i = z_{i-1} + x_i$.

Then clearly $0 \in \bigcap_{i=1}^n T_i(X_i)$, and by construction $\sum_{i=1}^n T_i(X_i) = z_0 + (X_1 + \cdots + X_n) + z_n$. Thus, we get by Proposition \ref{prop:trivialities}.\ref{item:trivialieties_(iii)} that
\begin{displaymath}
\textstyle |X_1| = |T_1(X_1)|, \ldots, |X_n| = |T_n(X_n)|\ \ \text{and} \ \ \big|\sum_{i=1}^n X_i\big| = \big|\sum_{i=1}^n T_i(X_i)\big|,
\end{displaymath}
while Proposition \ref{prop:unital_shifts_vs_gamma_equality} implies $\gamma(X_i) = \gamma(T_i(X_i))$ for each $i$ and $
\gamma(X_1 + \cdots + X_n) = \gamma(T_1(X_1) + \cdots + T_n(X_n))$. This proves the first part of  the claim.

As for the rest, assume in what follows that $\mathbb A$ is cancellative and $X_1^\times + \cdots + X_n^\times$ is finite. Then, letting $Z := X_1 + \cdots + X_n$ for brevity yields, by Proposition \ref{prop:units_distribution}, that $X_1^\times + \cdots + X_n^\times = Z^\times$, so there exist $\bar x_1 \in X_1, \ldots, \bar x_n \in X_n$ such that
\begin{equation}
\label{equ:invariance_done}
\gamma(Z) = \min_{\bar z \fixed[0.15]{ \text{ }} \ne \fixed[0.15]{ \text{ }} z \fixed[0.15]{ \text{ }} \in \fixed[0.15]{ \text{ }} Z} \ord(z - \bar z),
\end{equation}
where $\bar z := \bar x_1 + \cdots + \bar x_n$ and we are using that a supremum taken over a non-empty finite set is, in fact, a maximum.

It follows  that we can define an invariant $n$-transform $\bar{\bf T} = (\bar T_1, \ldots, \bar T_n)$ such that $0 \in \bigcap_{i=1}^n \bar T_i(X_i)$ and $\sum_{i=1}^n \bar{T}_i(X_i) = Z - \bar z$, so that
\begin{displaymath}
\gamma(Z) = \gamma(Z - \bar z) \ge \min_{0 \fixed[0.15]{ \text{ }} \ne \fixed[0.15]{ \text{ }} w \fixed[0.15]{ \text{ }} \in \fixed[0.15]{ \text{ }} Z - \bar z} \ord(w) = \min_{\bar{z} \fixed[0.15]{ \text{ }} \ne \fixed[0.15]{ \text{ }} z \fixed[0.15]{ \text{ }} \in \fixed[0.15]{ \text{ }} Z} \ord(z - \bar z),
\end{displaymath}
by the invariance of $\bar{\bf T}$ and the fact that, on the one hand, $0 \in Z - \bar z$ and, on the other hand, $w \in Z - \bar z$ if and only if $w = z - \bar z$ for some $z \in Z$. Together with \eqref{equ:invariance_done}, this ultimately leads to $ \gamma(Z - \bar z) = \min_{0 \fixed[0.15]{ \text{ }} \ne \fixed[0.15]{ \text{ }} w \fixed[0.15]{ \text{ }} \in \fixed[0.15]{ \text{ }} Z - \bar z} \ord(w)$, and thus to \eqref{equ:claimed_invariance}.
\end{proof}
We conclude the section with a proof of Lemma \ref{lem:simple_lemma}:
\begin{proof}[Proof of Lemma \ref{lem:simple_lemma}]
It is enough to prove that $\gamma(Y) \ge \gamma(X+Y) \ge \mathfrak p(A)$, since all the rest is more or less trivial from our definitions or by repeating the same reasoning with $\gamma(X)$ in place of $\gamma(Y)$.

For, pick $z_0 \in (X + Y)^\times$ using that, on the one hand, $(X+Y)^\times = X^\times + Y^\times$ by Proposition \ref{prop:units_distribution} and the cancellativity of $\mathbb A$, and on the other hand, $X^\times + Y^\times$ is non-empty by the standing assumptions. There then exist $x_0 \in X^\times$ and $y_0 \in Y^\times$ such that $z_0 = x_0 + y_0$, and it is immediate from Remark \ref{rem:inverse_of_sum_of_units} that, for all $y \in A$,
\begin{displaymath}
\langle x_0 + y - z_0 \rangle = x_0 + \langle y - y_0 \rangle - x_0,
\end{displaymath}
which, together with Proposition \ref{prop:trivialities}.\ref{item:trivialieties_(iii)}, gives $
\ord(y - y_0) = \ord(x_0 + y - z_0)$.
So considering that, for $y \in A$, it holds $x_0 + y = z_0$ if and only if $y = y_0$, we get
\begin{displaymath}
\label{lem:simple_lemma_1}
\inf_{y_0 \ne y \in Y} \ord(y - y_0) = \inf_{y_0 \ne y \in Y} \ord(x_0 + y - z_0) \ge \inf_{z_0 \ne z \in X + Y} \ord(z - z_0) \ge \mathfrak p(A),
\end{displaymath}
and this implies the claim by taking the supremum over the units of $X+Y$.
\end{proof}
\section{The proof of the main theorem}
\label{sec:main_results}
At long last, we are ready to prove the central contributions of the paper.
\begin{proof}[Proof of Theorem \ref{th:new_strong_main}]
The claim is obvious if $(X+Y)^\times = \emptyset$, since then
$\gamma(X + Y ) = 0$. So suppose for the remainder of the proof that $(X+Y)^\times$ is non-empty (which implies that $\mathbb A$ is a monoid), and set $\kappa := |X+Y|$, while noticing that, by Lemma \ref{prop:units_distribution}, both $X^\times$ and $Y^\times$ are non-empty, and so, by Proposition \ref{prop:trivialities}.\ref{item:trivialieties_(ii)}, we have
\begin{equation}
\label{equ:kappa}
\kappa \ge \max(|X|,|Y|) \ge \min(|X|,|Y|) \ge 1.
\end{equation}
The statement is still trivial if $\kappa = \infty$ (respectively, $\kappa = 1$), since then either of $X$ or $Y$ is infinite (respectively, both of $X$ and $Y$ are singletons), and hence $|X+Y| = |X| + |Y| - 1$ by \eqref{equ:kappa}.
In what follows, we thus let $\kappa$ be a positive integer and argue by strong induction on $\kappa$, by supposing to a contradiction that $\kappa < \min(\gamma(X+Y), |X|+|Y|-1)$. Based on the above, this means that
\begin{equation}
\label{equ:absurd_hp}
2 \le \kappa < \infty,\ \ 2 \le |X|,|Y| < \infty,\ \ \kappa < \gamma(X+Y),\ \ \text{and}\ \ \kappa \le |X| + |Y| - 2.
\end{equation}
More specifically, there is no loss of generality in assuming, as we do, that $(X,Y)$ is a ``minimax counterexample'' to the claim, by which we mean that, if $(\bar X, \bar Y)$ is another pair of subsets of $A$ with $\bar X^\times + \bar Y^\times \ne \emptyset$ and $|\bar X + \bar Y| < \min(\gamma(\bar X + \bar Y), |\bar X| + |\bar Y| - 1)$, then either $\kappa = |\bar X + \bar Y|$ and one of the following holds:
\begin{equation}
\label{equ:alternative_for_minimax}
\text{(i) } |\bar X| + |\bar Y| < |X| + |Y|; \qquad \text{(ii) } |\bar X| + |\bar Y| = |X| + |Y| \text{ and } |\bar X| \le |X|,
\end{equation}
or $\kappa < |\bar X + \bar Y|$. This makes sense since if $\bar X, \bar Y \subseteq A$, $|\bar X + \bar Y| = \kappa$,  and $\bar X^\times + \bar Y^\times \ne \emptyset$ then $\bar X^\times$ and $\bar Y^\times$ are non-empty, so we get, as before with \eqref{equ:kappa}, that
\begin{displaymath}
|\bar X| \le |\bar X| + |\bar Y| \le 2 \cdot \max(|\bar X|, |\bar Y|) \le 2 \cdot |\bar X + \bar Y| = 2\kappa < \infty.
\end{displaymath}
Finally, in the light of Corollary \ref{cor:units_give_invariant_n_transform}, we may also assume without restriction of generality, up to an invariant $2$-transform, that
\begin{equation}
\label{equ:special_form_of_gamma}
0 \in X \cap Y \ \ \text{and}\ \ \gamma(X + Y) = \min_{0 \ne z \in X + Y} \ord(z).
\end{equation}
Then both $X$ and $Y$ are subsets of $X + Y$, and by the inclusion-exclusion principle we have $\kappa \ge |X| + |Y| - |X \cap Y|$, which gives, together with \eqref{equ:absurd_hp}, that $X \cap Y$ has at least one element different from $0$, i.e. $
|X \cap Y| \ge 2$.

Upon these premises we prove the following intermediate claim (from here on, we set $Z := X \cap Y$ for notational convenience):
\begin{claim*}
There exists $n$ such that $X + nZ + Y \not\subseteq X + Y$, but $X + kZ + Y \subseteq X + Y$ for each $k = 0, \ldots, n-1$, with the convention that $0Z := \{0\}$.
\end{claim*}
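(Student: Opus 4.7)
The plan is to argue by contradiction. First, note that the claim is equivalent to the non-emptiness of the set
\[ S := \{n \in \mathbb{N}^+ : X + nZ + Y \not\subseteq X + Y\}, \]
since by well-ordering of $\mathbb{N}^+$ one can then take $n := \min S$, while the minimality of $n$ (combined with the trivial identity $X + 0Z + Y = X + \{0\} + Y = X + Y$, valid because $0Z = \{0\}$ and $0$ is the identity of $\mathbb{A}$) yields the remaining half of the statement. Thus I would assume for contradiction that $X + nZ + Y \subseteq X + Y$ for every $n \in \mathbb{N}$.

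Since $|Z| = |X \cap Y| \ge 2$ and $0 \in Z$, I fix some $z \in Z \setminus \{0\}$. For every integer $k \ge 1$ the element $kz$ lies in $X + (k-1)Z + Y$: when $k \ge 2$ this is witnessed by the decomposition $kz = z + (k-1)z + 0$, using $z \in X$, $(k-1)z \in (k-1)Z$, and $0 \in Y$; when $k = 1$ one has $z = z + 0 + 0 \in X + \{0\} + Y = X + 0Z + Y$. Invoking the contradiction hypothesis then gives $kz \in X + Y$ for every $k \in \mathbb{N}^+$, i.e., $\langle z \rangle = \{kz : k \in \mathbb{N}^+\} \subseteq X + Y$, so that $|X + Y| \ge |\langle z \rangle| = \ord(z)$.

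On the other hand, $z$ is a non-zero element of $X + Y$ (write $z = z + 0$ with $z \in X$ and $0 \in Y$), so \eqref{equ:special_form_of_gamma} gives $\gamma(X+Y) \le \ord(z)$, while \eqref{equ:absurd_hp} gives $\kappa = |X+Y| < \gamma(X+Y)$. Chaining these inequalities yields $|X+Y| < \ord(z) \le |X+Y|$, the desired contradiction. No step is really difficult; the essential observation is that the hypothetical containments $X + kZ + Y \subseteq X + Y$ would force the whole cyclic subsemigroup $\langle z \rangle$ to sit inside the (finite) sumset $X + Y$, which cannot accommodate it since $\ord(z)$ strictly exceeds $\kappa = |X + Y|$.
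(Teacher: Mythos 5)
Your proof is correct and follows essentially the same route as the paper: assume the containments hold for all $n$, deduce that a cyclic subsemigroup generated by a non-zero element of $Z$ lies inside $X+Y$, and contradict $\kappa < \gamma(X+Y)$ via \eqref{equ:special_form_of_gamma}. The only cosmetic difference is that you track a single $z \in Z \setminus \{0\}$ where the paper works with all of $\langle Z \rangle \subseteq X+Y$ before passing to $\ord(z)$; the substance is identical.
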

\begin{proof}[Proof of the claim]
Assume to the contrary that $X + nZ + Y \subseteq X + Y$ for all $n$. Then we get from $\langle Z \rangle = \bigcup_{n=1}^\infty nZ$ that $X + \langle Z \rangle + Y \subseteq X + Y$, which implies by \eqref{equ:special_form_of_gamma} that $\langle Z \rangle = 0 + \langle Z \rangle + 0 \subseteq X + Y$. So using that $|Z| \ge 2$ to guarantee that $\{0\} \subsetneq Z \subseteq X+Y$, we get by Proposition \ref{prop:trivialities}.\ref{item:trivialieties_(i)} and the same Equation \eqref{equ:special_form_of_gamma} that
\begin{displaymath}
\kappa \ge |\langle Z \rangle| \ge \max_{0 \ne z \in Z} \ord(z) \ge \min_{0 \ne z \in Z} \ord(z) \ge \min_{0 \ne z \in X + Y} \ord(z) = \gamma(X+Y).
\end{displaymath}
This is, however, absurd, for it is in contradiction to \eqref{equ:absurd_hp}, and we are done.
\end{proof}
So, let $n$ be as in the above claim and fix an element $\bar z \in nZ$ such that $X + \bar z + Y \not\subseteq X + Y$ (this exists by construction since otherwise we would have $X + nZ + Y \subseteq X + Y$, which is false). Consequently, observe that
\begin{equation}
\label{equ:z_barred_is_in_the_intersection}
(X + \bar z) \cup (\bar z + Y) \subseteq X + Y.
\end{equation}
In fact, $\bar z$ being an element of $nZ$ entails that there exist $z_1, \ldots, z_n \in Z$ such that $\bar z = z_1 + \cdots + z_n$, whence we get that both of $X + \bar z$ and $\bar z + Y$ are contained in $X + (n-1)Z + Y$. But $X + (n-1)Z + Y $ is, again by construction, a subset of $X+Y$, so \eqref{equ:z_barred_is_in_the_intersection} is proved. With this in hand, let us introduce the sets
\begin{equation*}
X_0 := \{x \in X: x + \bar z + Y \not\subseteq X + Y\}
\end{equation*}
and
\begin{equation*}
Y_0 := \{y \in Y: X + \bar z + y \not\subseteq X + Y\}.
\end{equation*}
It is clear that $X$ (respectively, $Y$) is disjoint from $X_0 + \bar z$ (respectively, from $\bar z + Y_0$). In addition, since $X + \bar z + Y \not\subseteq X + Y$, it is also immediate that $X_0$ and $Y_0$ are both non-empty. Finally, it follows from \eqref{equ:z_barred_is_in_the_intersection} that $0$ is not an element of either $X_0$ or $Y_0$. To sum it up,
\begin{equation}
\label{equ:resume_su_X0_and_Y0}
X_0 \ne \emptyset \ne Y_0,\ \ 0 \notin X_0 \cup Y_0,\ \ \text{and}\ \ (X_0 + \bar z) \cap X = (\bar z + Y_0) \cap Y = \emptyset.
\end{equation}
Now,  we get by Proposition \ref{prop:trivialities}.\ref{item:trivialieties_(iib)} and the cancellativity of $\mathbb A$ that
\begin{equation}
\label{equ:last}
|X_0 + \bar z| = |X_0| =: n_X\ \ \text{and}\ \ |\bar z + Y_0| = |Y_0| =: n_Y,
\end{equation}
which leads to distinguish between the following two cases:
\vskip 0.1cm
\begin{description}[leftmargin=0.6cm]
\item[Case 1] $n_X \ge n_Y$. We form $\bar X$ as the union of $X$ and $X_0 + \bar z$ and $\bar Y$ as the relative complement of $Y_0$ in $Y$. First, note that $0 \in \bar X^\times \cap \bar Y^\times$ by \eqref{equ:resume_su_X0_and_Y0}. Secondly, pick $\bar x \in \bar X$, $\bar y \in \bar Y$ and set $z := \bar x + \bar y$. If $\bar x \in X$, then obviously $z \in X + Y$;
%for \bar Y is a subset of Y from \eqref{equ:resume_su_X0_and_Y0} that $0$ is in $\bar X$, whence
    otherwise, by the construction of $\bar X$ and $\bar Y$, $\bar x \in X_0 + \bar z \subseteq X + \bar z$ and $\bar y \notin Y_0$, so that $\bar x + \bar y \in X + Y$. Therefore, $\emptyset \ne \bar X + \bar Y \subseteq X + Y$ and $0 \in \bar X + \bar Y$, so on the one hand $|\bar X + \bar Y| \le \kappa$ and on the other hand we have by \eqref{equ:special_form_of_gamma} that
    \begin{displaymath}
    \gamma(X+Y) \le \inf_{0 \ne z \in \bar X + \bar Y} \ord(z) \le \gamma(\bar X + \bar Y).
    \end{displaymath}
    Furthermore, \eqref{equ:resume_su_X0_and_Y0} and \eqref{equ:last} give $|\bar X| = |X| + |X_0 + \bar z| = |X| + n_X > |X|$ and $|\bar Y| = |Y| - |Y_0| = |Y| - n_Y$, so $|\bar X| + |\bar Y| = |X| + |Y| + n_X - n_Y \ge |X| + |Y|$.
\vskip 0.1cm
\item[Case 2] $n_X < n_Y$. We set $\bar X := X \setminus X_0$ and $\bar Y := (\bar z + Y_0) \cup Y$. Then by repeating (except for obvious modifications) the same reasoning as in the previous case, we get again that $0 \in \bar X^\times \cap \bar Y^\times$ and $\bar X + \bar Y \subseteq X + Y$, with the result that $|\bar X + \bar Y| \le \kappa$ and $\gamma(X + Y) \le \gamma(\bar X + \bar Y)$. In addition, it follows from \eqref{equ:resume_su_X0_and_Y0} and \eqref{equ:last} that $|\bar X| = |X| - |X_0| = |X| - n_X$ and $|\bar Y| = |Y| + |\bar z + Y_0| = |Y| + n_Y$, whence $|\bar X| + |\bar Y| = |X| + |Y| + n_Y - n_X > |X| + |Y|$.
\end{description}
\vskip 0.1cm
So in both cases, we end up with an absurd, for we find subsets $\bar X$ and $\bar Y$ of $A$ that contradict the ``minimaximality'' of $(X,Y)$ as expressed by \eqref{equ:alternative_for_minimax}.
\end{proof}
It is perhaps worth noticing that several pieces of the above proof of Theorem \ref{th:new_strong_main} do \textit{not} critically depend on the cancellativity of the ambient, while others can be adapted to the case where $\gamma(X+Y)$ is replaced by $\gamma(X,Y)$. Nonetheless, this does not seem to be enough to prove Conjecture \ref{conj:cauchy_davenport} (not even for two summands).
\begin{proof}[Proof of Corollary \ref{cor:karolyi_generalized}]
The claim is obvious if $n = 1$. Thus, assume in what follows that $n$ is $\ge 2$ and the statement is true for all sumsets of the form $Y_1 + \cdots + Y_{n-1}$ with $Y_1^\times + \cdots + Y_{n-1}^\times \ne \emptyset$.
%By symmetry and duality, we may also suppose that $\gamma(X_n) \le \gamma(X_1)$.
Then we get by Theorem \ref{th:new_strong_main} that
\begin{displaymath}
|X_1 + \cdots + X_n| \ge \min(\gamma(X_1 + \cdots + X_n), |X_1 + \cdots + X_{n-1}| + |X_n| - 1),
\end{displaymath}
which in turn implies, by Lemma \ref{lem:simple_lemma}, that
\begin{equation}
\label{equ:karolyi1}
|X_1 + \cdots + X_n| \ge \min(\mathfrak p(A), |X_1 + \cdots + X_{n-1}| + |X_n| - 1).
\end{equation}
But $X_1^\times + \cdots + X_{n-1}^\times \ne \emptyset$ by Proposition \ref{prop:units_distribution}, so our assumptions give
\begin{displaymath}
 |X_1 + \cdots + X_{n-1}| \ge \min(\mathfrak p(A), |X_1| + \cdots + |X_{n-1}| + 2 - n),
\end{displaymath}
which, together with \eqref{equ:karolyi1}, yields the desired conclusion (by induction).
\end{proof}
\section{Closing remarks}
While every \textit{commutative} cancellative semigroup embeds as
a subsemigroup into a group (through the standard construction of the group of fractions of a commutative monoid),
%see \cite[Section I.2.4]{BourAlgI}),
nothing similar is true in the non-commutative setting, no matter if the ambient semigroup is finitely generated or not. This is related to an old question in the theory of semigroups first answered by A.~I. Mal'cev in \cite{Malcev37}, and serves as a ``precondition'' for the present paper, as
it shows that the study of sumsets in cancellative semigroups cannot be systematically
reduced,
in the absence of commutativity, to the case of groups, at least not in an
obvious way (note that the semigroup considered by Mal'cev is also finitely generated).

On the other hand, every semigroup embeds into a monoid (e.g., through the unitization process of Section \ref{sec:preliminaries}), with the result that, \textit{for the specific purposes of the manuscript}, we could have assumed almost everywhere that the ``ambient'' is a monoid (rather than just a semigroup), but we did differently because, first, the assumption is not really necessary (insofar as Theorem \ref{th:new_strong_main} is vacuously true if $A^\times = \emptyset$), and second, it seems more appropriate to develop \textit{as much as possible} of the material with no regard to the presence of an identity (e.g., since this is better suited for further generalizations).

Next, we show with the following example that cancellativity is a somewhat necessary assumption for anything in the lines of Conjecture \ref{conj:cauchy_davenport} to be true:
\begin{example}
Let $X$ and $Y$ be non-empty disjoint sets with $|X| < \infty$, and denote by $(F_X, \cdot_X)$ and $(F_Y, \cdot_Y)$, respectively, the free abelian semigroups on $X$ and $Y$. For a fixed $e \notin F_X \cup F_Y$, we define a binary operation $\cdot$ on $F := F_X \cup F_Y \cup \{e\}$ by $u \cdot v := u \cdot_X v$ if $u,v \in F_X$, $u \cdot v := u \cdot_Y v$ if $u,v \in F_Y$, and $u \cdot v := e$ otherwise.

It is seen that $\cdot$ is associative; we write $\mathbb F$ for a unitization of $(F, \cdot)$ and $1$ for the identity of $\mathbb F$. Then taking $Z := Y \cup \{1\}$ gives $\gamma_\mathbb{F}(Z) = \infty$ and $X \cdot Z := \{x \cdot z: x \in X, z \in Z\} = X \cup \{e\}$, with the result that
$|X \cdot Z| < |X| + |Z| - 1 \le \gamma_\mathbb{F}(X,Z),
$
viz $|X \cdot Z| < \min(\gamma_\mathbb{F}(X,Z),|X| + |Z| - 1)$, where the right-hand side can be made arbitrarily larger than the left-hand side.
\end{example}

Lastly, we record here that if $\AA$ is cancellative and $X$ is a non-empty finite subsemigroup of $\AA$ with $|X| \ge 2$, then $|X+X| = |X| < 2|X|-1$, no matter whether or not $X$ contains units.

On the one hand, this means that the trivial lower bound provided by Proposition \ref{prop:trivialities}.\ref{item:trivialieties_(ii)} is sometimes better than the bound furnished by Theorem \ref{th:new_strong_main} (but this is, in some sense, a trivial observation). On the other hand, the problem arises of characterizing the pairs $(X,Y)$ of subsets of $A$ such that $|X+Y| < \gamma(X+Y)$, which we plan to investigate in future work.

\subsubsection*{Note added in proof:} It is possible to prove, as a corollary of Theorem \ref{th:old_strong_main}, that Conjecture \ref{conj:cauchy_davenport} is true when $\AA$ is a cancellative and commutative semigroup and $X_1, \ldots, X_n \subseteq A$ are such that $\gamma(X_1) = \cdots = \gamma(X_n)$, so that in particular $|nX| \ge \min(\gamma(X), n|X|+1-n)$ for every $X \subseteq A$ (actually, no restriction is necessary here on the size of the sets $X_i$).

\section{Acknowledgments}

The author is grateful to Carlo Sanna (Universit\`a di Torino, IT) for a detailed proof-reading of the paper, to Alain Plagne (CMLS, \'Ecole polytechnique, FR) for useful comments, and to an anonymous referee for many valuable suggestions.

\end{document}